\theoremstyle{plain}
  \newtheorem*{proposition}{Proposition}
\begin{document}

\title{A note on extensions of approximate ultrametrics}

\author{Manor Mendel}
\thanks{
Written while the author was a member of the Institute for Advanced Study
at Princeton NJ. Partially supported by ISF grant 93/11, BSF grant 2010021, NSF grant for "Center for intractability" and NSF CDI grant on pseudorandomness.} 
\address{Mathematics and Computer Science, Open University of Israel} 
\email{mendelma@gmail.com}

%\subjclass[2010]{30L05,37F35}
%\keywords{Bi-Lipschitz embeddings, Hausdorff dimension, Assouad dimension, ultrametrics}
\maketitle

An ultrametric $\rho:X\times X\to [0,\infty)$ is a metric satisfying a strong form of the triangle
inequality:
\[ \rho(x,z)\le \max\{\rho(x,y),\rho(y,z)\}, \quad \forall x,y,z\in X.\]
It  $D$-approximates a metric $d:X\times X\to [0,\infty)$ if there exists a scaling factor $c>0$
such that
\[ d(x,y)\le c\cdot \rho(x,y) \le D\cdot d(x,y), \quad \forall x,y\in X.\]

Let $(X,d)$ be a metric space, $S\subset X$ and suppose that $S$ has an ultrametric 
$\rho:S\times S\to [0,\infty)$ that $D$ approximates $d$ on $S$. 
It was observed in~\cite[Lemma~4.1]{MN07}
that $\rho$ can be extended to an ultrametric $\bar\rho:X\times X\to[0,\infty)$ which $6D$ approximates
$d$ on pairs from $S\times X$. In this note we optimize the (simple) argument from~\cite{MN07}, 
obtaining an extension  $\bar\rho:X\times X\to[0,\infty)$ that $2D+1$ approximates
$d$ on pairs from $S\times X$. This bound is optimal in the worst case when $D\in \mathbb N$.
While the following proposition is true for an arbitrary metric space $X$ and an arbitrary subset $S\subset X$, we will assume for simplicity that $X$ is locally compact and $S$ is closed.

\begin{proposition}
Let $(X,d)$ be a locally compact metric space. Given a closed subset $S\subset X$ and an ultrametric 
$\rho:S\times S\to [0\infty)$ on $S$ satisfying
\[ d(x,y)\le \rho(x,y) \le D\cdot d(x,y), \quad \text{for every } x,y\in S,\]
there exists an ultrametric $\bar \rho:X\times X\to[0,\infty)$ extending $\rho$, such that
\begin{align}
\bar \rho(x,y) &= \rho(x,y), & \forall x,y\in S;\\
\frac{2D}{2(D+1)} \cdot d(x,y) & \le \bar\rho(x,y)   & \forall x,y\in X;\\
 \frac{2D}{2D+1} \cdot d(x,y) & \le \bar\rho(x,y) \le 2D\cdot d(x,y) & \forall x\in S,\, y\in X.
\end{align}
In particular, $\bar \rho$ is $2D+1$ approximation of $d$ on pairs from $S\times X$.
\end{proposition}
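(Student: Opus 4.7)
The plan is to extend $\rho$ via a nearest-point projection. Local compactness of $X$ together with closedness of $S$ guarantees that for every $x\in X$ the distance $d(x,S)$ is attained at some point of $S$; fix such a minimizer $\pi(x)\in S$ (taking $\pi(x)=x$ when $x\in S$). I would then define
\[
\bar\rho(x,y)\;=\;\max\bigl\{\rho(\pi(x),\pi(y)),\;2D\,d(x,\pi(x)),\;2D\,d(y,\pi(y))\bigr\},
\]
the coefficient $2D$ being chosen so as to balance the two regimes in the lower-bound argument below.

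Three of the four required properties are essentially direct. Agreement with $\rho$ on $S\times S$ is immediate since $\pi$ restricts to the identity there. For the strong triangle inequality $\bar\rho(x,z)\le\max\{\bar\rho(x,y),\bar\rho(y,z)\}$, I would check that each of the three terms making up $\bar\rho(x,z)$ is dominated by the right-hand side: the $\rho(\pi(x),\pi(z))$ term by applying the ultrametric inequality to $\pi(x),\pi(y),\pi(z)\in S$, and each leg term by its appearance in exactly one of $\bar\rho(x,y)$ or $\bar\rho(y,z)$. The upper bound on $S\times X$ is an exercise in the triangle inequality: when $x\in S$ one has $d(y,\pi(y))\le d(y,x)$, whence $d(x,\pi(y))\le 2d(x,y)$ and therefore $\rho(x,\pi(y))\le 2D\,d(x,y)$; the leg term is trivially bounded by $2D\,d(x,y)$.

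The heart of the argument is the two lower bounds, both handled by the same dichotomy on whether the leg contribution is large or small relative to $d(x,y)$. For $x\in S$, $y\in X$, I would choose the threshold $\tfrac{1}{2D+1}$: if $d(y,\pi(y))\ge\tfrac{d(x,y)}{2D+1}$ then $2D\,d(y,\pi(y))\ge\tfrac{2D}{2D+1}d(x,y)$; otherwise the triangle inequality yields $d(x,\pi(y))>\tfrac{2D}{2D+1}d(x,y)$, and the hypothesis $\rho\ge d$ on $S$ closes the case. The general bound for $x,y\in X$ is the analogous argument with threshold $\tfrac{1}{2(D+1)}$ applied simultaneously to both leg distances, yielding $\bar\rho(x,y)\ge\tfrac{2D}{2(D+1)}d(x,y)$; the $2$-in-the-denominator reflects the fact that here we must pay twice for perturbations at both endpoints.

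\textbf{Anticipated obstacle.} The only non-routine input is the existence of the nearest point $\pi(x)$, which I would justify by a compactness argument on a ball of radius slightly larger than $d(x,S)$ (and, if local compactness turns out to be insufficient on its own, by taking $\pi(x)$ to be an approximate minimizer and passing to a limit in the final estimates, which depend continuously on $d(x,\pi(x))$). Everything else is a disciplined optimization of the single parameter multiplying $d(\cdot,\pi(\cdot))$.
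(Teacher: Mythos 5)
Your proposal is correct and follows essentially the same route as the paper: the identical extension formula $\bar\rho(x,y)=\max\{\rho(\pi(x),\pi(y)),2D\,d(x,\pi(x)),2D\,d(y,\pi(y))\}$ with the same constant $2D$, the same term-by-term verification of the strong triangle inequality, and the same upper bound via $d(x,\pi(y))\le 2d(x,y)$. The only cosmetic difference is that you establish the lower bounds by a threshold dichotomy on the leg distances, whereas the paper bounds the maximum from below by a convex combination of its three terms; the two arguments are equivalent and yield the same constants.
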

\begin{proof}
For $y\in X$, let $N(y)\in S$ be its nearest neighbor in $S$, i.e., $d(y,N(y))=\min_{x\in S} d(x,y)$ (breaking
ties arbitrarily).
Define an ultrametric $\bar \rho $ on $X$ as follows:
\[ \bar\rho(x,y)= \max\{2D\cdot d(x,N(x)), 2D\cdot d(y,N(y)), \rho(N(x),N(y))\}.\]
Clearly, the restriction of $\bar \rho $ to $S$ is equal $\rho$. $\bar \rho$ is an ultrameric since,
\begin{align*} 
\bar\rho(x,z) &=
\max\{2D\cdot d(x,N(x)), 2D\cdot d(z,N(z), \rho(N(x),N(z))\}
\\ & \le 
\max \bigl \{2D\cdot d(x,N(x)), 2D\cdot d(z,N(z), \\ & \qquad \qquad 2D\cdot d(y,N(y)), \rho(N(x),N(y)), \rho(N(y),N(z)) \bigr\}
\\ &= \max \bigl \{
\max\{2D\cdot d(x,N(x)),  2D\cdot d(y,N(y)), \rho(N(x),N(y))\} 
\; ,\\ &
\qquad \qquad \max\{ 2D\cdot d(z,N(z), 2D\cdot d(y,N(y)),  \rho(N(y),N(z))\}
\bigr\}
\\ &= \max\{\bar \rho(x,y), \bar \rho(y,z)\}.
\end{align*}

We next show that $\bar\rho(x,y)\ge \frac{2D}{2(D+1)} \cdot d(x,y) $ for every $x,y\in X$.
\begin{multline*}
\bar\rho(x,y)= \max\{2D\cdot d(x,N(x)), 2D\cdot d(y,N(y), \rho(N(x),N(y))\}
\\ \ge \tfrac{2D}{2(D+1)} d(x,N(x)) + \tfrac{2D}{2(D+1)} d(y,N(y))+ (1-\tfrac{2}{2(D+1)}) d(N(x),N(y))
\\ \ge \tfrac{2D}{2(D+1)} \cdot d(x,y).
\end{multline*}

Similarly for every $x\in S$ and $y\in X$,
\begin{multline*}
\bar\rho(x,y)= \max\{2D\cdot d(x,N(x)), 2D\cdot d(y,N(y), \rho(N(x),N(y))\}
\\ \ge \tfrac{2D}{2D+1} d(y,N(y))+ (1-\tfrac{1}{2D+1}) d(N(x),N(y))
\\ \ge \tfrac{2D}{2D+1} \cdot d(x,y).
\end{multline*}

Lastly, assume $x\in S$, and $y\in X$. By definition,
$d(y,N(y))\le d(y,x)$, and so by the triangle inequality,
$d(x,N(y))\le d(x,y)+d(y,N(y))\le 2 d(x,y)$. Hence,
\begin{multline*}
\bar \rho(x,y)= \max\{2D\cdot d(y,N(y)), \rho(x,N(y)) \}
\\ \le \max\{2D\cdot d(y,N(y)), D\cdot d(x,N(y))\} 
\\ \le \max\{2D\cdot d(y,N(y)), 2D\cdot d(x,y)\}= 2D\cdot d(x,y). \qedhere
\end{multline*}
\end{proof}

The $2D+1$ bound on the approximation of pairs from $S\times X$ 
above is tight for every $D\in \mathbb N$ 
as the following example shows:
\[ X=\{0,1,\ldots, 2D+1\}, \quad S=\{1,3,\ldots, 2D+1\} .\]
Since $S$ is a set of $D+1$ equally spaced points on the line, it is $D$ approximate ultrametric.
On the other hand, for any ultrametric $\bar \rho$ on $X$ which dominates the line distances in $S\times X$ we have
\[ \max_{i\in\{0,\ldots,2D\}} \bar \rho(i,i+1)\ge \bar \rho(0,2D+1)\ge 2D+1.\]

\end{document}